\newtheorem{theorem}{Theorem}[section]
\newtheorem{conjecture}[theorem]{Conjecture}
\begin{document}

\title{Ramanujan series with a shift}
\author{Jesús Guillera}
\address{Department of Mathematics, University of Zaragoza, 50009 Zaragoza, SPAIN}
\email{jguillera@gmail.com}

\dedicatory{In memory of Jon Borwein, \\ the main driver and master of the Experimental Mathematics.}

\keywords{Ramanujan series for $1/\pi$, hypergeometric series, lattice sums, Dirichlet $L$-values, modular forms.}

\subjclass{Primary 33C20, 33C05; Secondary 11F03; 33C75, 33E05}

\date{}

\maketitle

\begin{abstract}
We consider an extension of the Ramanujan series with a variable $x$. If we let $x=x_0$, we call the resulting series: ``Ramanujan series with the shift $x_0$". Then, we relate these shifted series to some $q$-series and solve the case of level $4$ with the shift $x_0=1/2$. Finally, we indicate a possible way towards proving some patterns observed by the author corresponding to the levels $\ell=1, 2, 3$ and the shift $x_0=1/2$. 
\end{abstract}

\section{shift and upside-down transformations}

We call a shift to a transformation which consist of applying the substitution $n \to n + x_0$ inside a series, and we say that $x_0$ is the shift. For example, the series 
\begin{equation}\label{rama}
\sum_{n=0}^{\infty} z^n  \frac{ \left( \frac{1}{2} \right)_{n}
\left( \frac{1}{s}  \right)_{n}\left( \frac{s-1}{s} \right)_{n}}{ (1)_{n}^3}(a+bn),
\end{equation}
shifted $x_0$ becomes
\begin{equation}\label{rama-G-x0}
\sum_{n=0}^{\infty} z^{n+x_0}  \frac{ \left( \frac12+x_0 \right)_n\left(  \frac{1}{s}+x_0 \right)_n \left( \frac{s-1}{s}+x_0 \right)_n}{ (1+x_0)_n^3} (a+b(n+x_0)).
\end{equation}
multiplied by a factor which does not depend on $n$ (we will ignore that factor). An upside down transformation consist of the substitution $n \to -n$. That is
\begin{equation}\label{rama-ud}
\sum_{n=1}^{\infty} z^{-n}  \frac{ \left( \frac{1}{2} \right)_{-n}
\left( \frac{1}{s}  \right)_{-n}\left( \frac{s-1}{s} \right)_{-n}}{ (1)_{-n}^3}(a-bn),
\end{equation}
understanding the rising factorials in the way indicated below 
\[
(a)_{-n} \rightarrow \frac{(-1)^n}{(1-a)_n} \, \, \text{if $a\neq 1$ \, and \,}  \quad (1)_{-n} \rightarrow \frac{n(-1)^n}{(1)_n}.
\]
These substitutions are justified because they preserve formally the recurrence equation $\Gamma(x+1)=x \, \Gamma(x)$; see the duality property \cite[Chapter 7]{PWZ} and the application shown in \cite[Section 4]{Gui-WZ-diver}, and see \cite{Gui-Rog} for the analytic interpretation. If $|z|>0$ we understand the ``divergent" series (\ref{rama}) as its analytic continuation, and if $|z|<0$ we interpret the ``divergent" series (\ref{rama-ud}) in the same way. While in \cite{Gui-Rog} we have studied the ``upside-down" transformation, in this paper we consider the  transformations with a shift. In \cite{Gui-Rog} we prove that the upside-down transformation modify the value of the modular variable $q$. Here we will see that a shift do not modify it.

The following kind of series for $1/\pi$:
\begin{equation}\label{ramanujan}
\sum_{n=0}^{\infty} z^n  \frac{ \left( \frac{1}{2} \right)_{n}
\left( \frac{1}{s}  \right)_{n}\left( \frac{s-1}{s} \right)_{n}}{ (1)_{n}^3}(a+bn)=\frac{1}{\pi},
\end{equation}
where $s \in \{2,3,4,6 \}$ can be parametrized with a modular function $z=z_{\ell}(q)$, and two modular forms  $b=b_{\ell}(q)$ and $a=a_{\ell}(q)$ of weight $2$. It is known that the level of these functions is $\ell= 1, 2, 3, 4$ for $s \!=\! 6, 4, 3, 2$ respectively, and that for $q=\pm e^{-\pi \sqrt{r}}$  with $r \in \mathbb{Q^{+}}$ the values of $z$, $b$, $a$ are algebraic reals (the sign $+$ corresponds to series of positive terms and the sign $-$ to alternating series). In these cases the series (\ref{ramanujan}) are named as Ramanujan-type series, in honour to the Indian genius Srinivasa Ramanujan who gave $17$ examples of them. If we want to consider algebraic complex solutions, then we let $q=e^{2\pi i \tau}$, where $\tau$ is a quadratic irrational with ${\rm Im}(\tau)>0$.
In this paper we are mainly interested in the evaluations of (\ref{completa-zq}) for those special values of $q$ and $x=1/2$. We will use the following theorems:

\begin{theorem}\label{teor-y-rama} 
Let 
\begin{equation}\label{y-x}
F_{\ell}(x,z)=\sum_{n=0}^{\infty} z^{n+x}  \frac{ \left( \frac12 + x  \right)_n
\left(  \frac{1}{s} + x \right)_n \left( \frac{s-1}{s} + x \right)_n}{ (1+x)_n^3 }, \qquad \frac{F_{\ell}(x,z)}{F_{\ell}(0,z)}=\phi(q),
\end{equation}
and
\begin{equation}\label{rama-G-x}
G_{\ell}(x,z)=\sum_{n=0}^{\infty} z^{n+x}  \frac{ \left( \frac12+x \right)_n\left(  \frac{1}{s}+x \right)_n \left( \frac{s-1}{s}+x \right)_n}{ (1+x)_n^3} (a+b(n+x)),
\end{equation}
where $z=z(q)$, $b=b(q)$ and $a=a(q)$ are the functions mentioned before. Then we have
\begin{equation}\label{completa-zq}
G_{\ell}(x,q)=\frac{1}{\pi} \left( \phi(q) - \ln|q| \, q \frac{d \phi(q)}{dq} \right).
\end{equation}
\end{theorem}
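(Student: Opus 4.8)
The plan is to reduce the shifted series $G_\ell(x,z)$ to the plain shifted series $F_\ell(x,z)$ together with its $z$-derivative, and then to transport everything to the variable $q$ through the modular parametrization. Writing $c_n(x)=\frac{(\frac12+x)_n(\frac1s+x)_n(\frac{s-1}{s}+x)_n}{(1+x)_n^3}$, one has $\sum_n (n+x)\,z^{n+x}c_n(x)=z\,\partial_z F_\ell(x,z)$, because $z\,\partial_z z^{n+x}=(n+x)z^{n+x}$. Splitting the factor $a+b(n+x)$ in \eqref{rama-G-x} accordingly gives the identity
\[
G_\ell(x,z)=a\,F_\ell(x,z)+b\,z\,\partial_z F_\ell(x,z),
\]
valid for every $x$; in particular the case $x=0$ is the classical Ramanujan identity \eqref{ramanujan}, which in the modular parametrization reads $G_\ell(0,q)=1/\pi$.

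Next I would substitute $z=z(q)$ and use the defining relation $F_\ell(x,z(q))=\phi(q)\,F_\ell(0,z(q))$ from \eqref{y-x}. Abbreviating $F_0=F_\ell(0,z(q))$, letting a dot denote $d/dq$, and applying the chain rule $\partial_z=(\dot z)^{-1}\,d/dq$, one finds $\partial_z F_\ell(x,z)\big|_{z(q)}=(\dot\phi\,F_0+\phi\,\dot F_0)/\dot z$. Inserting this into the previous display and collecting the terms proportional to $\phi$ yields
\[
G_\ell(x,q)=\phi\left(a\,F_0+b\,z\,\frac{\dot F_0}{\dot z}\right)+\frac{b\,z\,F_0}{\dot z}\,\dot\phi=\frac{\phi}{\pi}+\frac{b\,z\,F_0}{\dot z}\,\frac{d\phi}{dq},
\]
where in the last step the bracket is recognised as $G_\ell(0,q)=1/\pi$. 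The crucial structural point is that the shift enters \emph{only} through the single cross term $\frac{bzF_0}{\dot z}\,\phi'$, whose coefficient $\frac{bzF_0}{\dot z}$ does not depend on $x$; this is exactly what permits the clean factorisation.

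Comparing the last display with the right-hand side of \eqref{completa-zq}, the whole theorem reduces to the single modular identity
\[
\frac{b(q)\,z(q)\,F_0(q)}{q\,\dfrac{dz}{dq}}=-\frac{\ln|q|}{\pi},\qquad\text{equivalently}\qquad \pi\,b\,z\,F_0=-\ln|q|\;q\frac{dz}{dq}.
\]
This is the step I expect to be the main obstacle, since it is the only place where the genuine analytic nature of the nome enters (it is also what produces the transcendental factor $\ln|q|$, absent from the purely algebraic Ramanujan data). I would prove it from the description of $q$ as a quotient of periods: by a Clausen-type identity $F_\ell(0,z)=u^2$ for a period $u$ solving a second-order hypergeometric equation, and $\ln q=-C_\ell\,u^\ast/u$ for the complementary solution $u^\ast$ and a level constant $C_\ell$. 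Differentiating this ratio in $z$ and using the constancy of the relevant Wronskian expression $\mathrm{Wr}(u,u^\ast)=W_0\,z^{-1}(1-z)^{-\rho}$ turns $1/(q\,dz/dq)=d(\ln q)/dz$ into an explicit algebraic multiple of $1/(zF_0)$; matching this with the standard expression of the weight-two form $b$ as the corresponding period ratio then gives the displayed identity. Substituting it into the formula of the second paragraph produces exactly $G_\ell(x,q)=\frac1\pi\big(\phi-\ln|q|\,q\,\tfrac{d\phi}{dq}\big)$.

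In summary, the two derivative manipulations are routine; the real content is the period/Wronskian computation establishing the last identity, together with verifying the level constant $C_\ell$ and the exponent $\rho$ in each of the four cases $s\in\{2,3,4,6\}$, so that the algebraic factors collapse to the stated $-\ln|q|/\pi$.
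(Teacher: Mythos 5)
Your first two paragraphs are correct, and they are in substance the same mechanism as in the source on which the paper relies: the paper's entire proof is the one-line citation to \cite[Proposition 2]{Gui-Rog}, and that proposition rests on exactly the manipulation you perform — write $G_\ell(x,z)=a\,F_\ell(x,z)+b\,z\,\partial_z F_\ell(x,z)$, pass to the variable $q$, and observe that the shift enters only through the cross term whose coefficient $b z F_\ell(0,q)/\dot z$ is independent of $x$. Your ``key identity'' $\pi\,b\,z\,F_\ell(0,q)=-\ln|q|\;q\frac{dz}{dq}$ is also true; for instance at level $4$, where $q\frac{dz_4}{dq}=z_4\sqrt{1-z_4}\;\theta_3^4(q)$, it reduces to the classical formula $b=\sqrt{r}\sqrt{1-z_4}$ at $q=e^{-\pi\sqrt{r}}$.

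The shaky point is the logical status of that key identity, i.e.\ your fourth paragraph. The theorem speaks of ``the functions mentioned before'', and the pair $(a,b)$ is \emph{not} pinned down by requiring the $x=0$ series \eqref{ramanujan} to equal $1/\pi$: at each $q$ that is one linear condition on two unknowns. So your two inputs (the classical $x=0$ identity, plus the formula for $b$) cannot both be extracted from the statement as you framed it; you must say what $b$ \emph{is}. In the cited reference this issue does not arise because $a(q)$ and $b(q)$ are given by the explicit quasi-modular expressions
\[
b(q)=-\frac{\ln|q|}{\pi}\,\frac{q\,\frac{dz}{dq}}{z\,F_\ell(0,q)},\qquad
a(q)=\frac{1}{\pi F_\ell(0,q)}+\frac{\ln|q|}{\pi}\,\frac{q\,\frac{d}{dq}F_\ell(0,q)}{F_\ell^2(0,q)},
\]
so that your key identity is the \emph{definition} of $b$, the bracket $a F_0+b z\dot F_0/\dot z$ collapses to $1/\pi$ by direct substitution (you do not even need to assume the $x=0$ case as an input), and your first two paragraphs already constitute the whole proof. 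Your Wronskian/Legendre step is needed only if one instead characterizes $b$ as the algebraic coefficient of a classical Ramanujan series at a CM point; that is genuine mathematics (it is where the Legendre relation and the non-algebraicity of $E_2$ enter), but as sketched it is circular, since ``matching this with the standard expression of the weight-two form $b$ as the corresponding period ratio'' presupposes exactly the formula to be proved. Either adopt the definitions above and drop the fourth paragraph, or replace it by an actual derivation that (i) proves the algebraic formula for $b$ from the Legendre relation and (ii) shows that among pairs $(a,b)$ that are \emph{both} algebraic at the CM point and satisfy the $x=0$ identity, the pair is unique — only then does your reduction apply to the classical data.
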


\begin{proof}
It is a particular case of \cite[Proposition 2]{Gui-Rog}.
\end{proof}

\begin{theorem}\label{teor-yang}
The following identity holds:
\begin{equation}\label{for-yang}
\left( q \frac{d}{dq} \right)^3 \phi(q)=\frac{x^3 z^x}{\sqrt{1-z}} \left( \frac{q}{z} \frac{dz}{dq} \right)^2 = x^3 F_{\ell}^2(0,q) \; \sqrt{1-z} \; \sqrt{z}.
\end{equation}
\end{theorem}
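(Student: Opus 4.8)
The plan is to produce one inhomogeneous third-order differential equation for $F_\ell(x,z)$ and then transport it to the variable $q$ by conjugating with $F_\ell(0,z)$. Write $\theta_z=z\frac{d}{dz}$, $\theta_q=q\frac{d}{dq}$, and abbreviate $Y=F_\ell(x,z)$, $Y_0=F_\ell(0,z)$ and $t=\frac{q}{z}\frac{dz}{dq}$, so that $\phi=Y/Y_0$ and $\theta_q=t\,\theta_z=z\,t\,\frac{d}{dz}$. Reading the coefficient recurrence of (\ref{y-x}) off term by term shows that $Y$ satisfies
\[
L\,Y:=\left[\theta_z^3-z\left(\theta_z+\tfrac12\right)\left(\theta_z+\tfrac1s\right)\left(\theta_z+\tfrac{s-1}{s}\right)\right]Y=x^3z^x,
\]
where the inhomogeneous term $x^3z^x$ is exactly the $n=0$ contribution to $\theta_z^3Y$ that the shifted product on the right cannot match; in particular $L\,Y_0=0$.

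Next I would describe the solution space of the homogeneous operator $L$. By Clausen's identity $Y_0={}_2F_1\!\left(\tfrac{1}{2s},\tfrac{s-1}{2s};1;z\right)^2=f(z)^2$, so $L$ is the symmetric square of the second-order hypergeometric operator annihilating $f$. If $\tau$ denotes the ratio of the two solutions of that second-order equation, with $q=e^{2\pi i\tau}$, then the three solutions of $L$ are $Y_0,\,Y_0\tau,\,Y_0\tau^2$. The Wronskian of the second-order equation is proportional to $z^{-1}(1-z)^{-1/2}$, so $\frac{d\tau}{dz}$ is a constant multiple of $\bigl(z\sqrt{1-z}\,Y_0\bigr)^{-1}$; with the normalization of $q$ built into the modular parametrization $z=z(q)$ this is the classical relation
\[
t=\frac{q}{z}\frac{dz}{dq}=\sqrt{1-z}\;F_\ell(0,z),
\]
the sole analytic input beyond the recurrence.

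The heart of the argument is an operator factorization. Since $\theta_q=z\,t\,\frac{d}{dz}$, the cube $\theta_q^3$ has leading term $z^3t^3\frac{d^3}{dz^3}$ and annihilates $1,\tau,\tau^2$ (because $\tau\propto\log q$). The conjugated operator $g\mapsto\frac{1}{Y_0}L(Y_0\,g)$ is again third order, has the same leading term $z^3(1-z)\frac{d^3}{dz^3}$ as $L$, and by the previous paragraph annihilates the same functions $1,\tau,\tau^2$. As a third-order operator is determined by its kernel and its leading coefficient, comparing top coefficients yields
\[
\frac{1}{Y_0}L(Y_0\,g)=\frac{1-z}{t^3}\,\theta_q^3\,g .
\]
Taking $g=\phi$ and using $L\,Y=x^3z^x$ gives $\frac{1-z}{t^3}\theta_q^3\phi=\frac{x^3z^x}{Y_0}$, that is $\theta_q^3\phi=\frac{x^3z^x\,t^3}{(1-z)\,Y_0}$. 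Substituting $t=\sqrt{1-z}\,Y_0$ collapses the right side to $x^3z^x\sqrt{1-z}\,Y_0^2$, which is precisely the middle member $\frac{x^3z^x}{\sqrt{1-z}}t^2$; and since $z^x=\sqrt z$ for the shift $x=1/2$ it equals $x^3F_\ell^2(0,q)\sqrt{1-z}\sqrt z$, closing the chain.

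I expect the main obstacle to be the factorization together with the precise normalization of $q$: to get the operator identity with coefficient \emph{exactly} $\frac{1-z}{t^3}$ one must pin down the multiplicative constant in $\frac{d\tau}{dz}$, equivalently prove the modular relation $t=\sqrt{1-z}\,F_\ell(0,z)$, which is where all of the modular content sits. A secondary point worth stating explicitly is that the first equality is valid for every $x$, whereas passing to the last member uses the specific shift $x=1/2$ through $z^x=\sqrt z$.
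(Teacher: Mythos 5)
Your proposal is correct and is essentially the paper's own proof written out in full: the paper likewise starts from the inhomogeneous equation $\mathcal{D}F_{\ell}(x,z)=x^3z^x$ (which it cites from \cite{Gui-matrix} rather than deriving from the coefficient recurrence), transports it to the $q$-variable by exactly your conjugation/factorization identity (which it cites as \cite[Lemma 1]{yang} instead of reproving via the symmetric-square and kernel-plus-leading-coefficient argument), and concludes with the modular relation $\frac{q}{z}\frac{dz}{dq}=\sqrt{1-z}\,F_{\ell}(0,z)$ (cited as \cite[eq. 2.34]{Gui-matrix}), which is the one normalization you rightly single out as the genuine modular input. Your closing remark that the last member of \eqref{for-yang} uses $z^x=\sqrt{z}$, hence implicitly $x=1/2$, is also an accurate reading of the statement.
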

\begin{proof}
The differential operator:
\[
\mathcal{D}=\left( z\frac{d}{dz} \right)^3 - z \left( z\frac{d}{dz}+\frac{1}{2} \right)\left( z\frac{d}{dz}+ \frac{1}{s} \right)\left( z\frac{d}{dz}+\frac{s-1}{s} \right),
\]
annihilates $F_{\ell}(0,z)$, and in  \cite{Gui-matrix} we proved that $\mathcal{D} F_{\ell}(x,z)=x^3 z^x$. As $F(0,q)$ is a modular form such that $\mathcal{D} F(0,z)=0$ we can apply \cite[Lemma 1]{yang}, and as 
\[
\mathcal{D}=(1-z)\left( z\frac{d}{dz} \right)^3+\cdots,
\]
we have
\[ 
\left( q \frac{d}{dq} \right)^3 \frac{F_{\ell}(x,q)}{F_{\ell}(0,q)}=\frac{\mathcal{D} F_{\ell}(x,z)}{F_{\ell}(0,z) (1-z)} \left( \frac{q}{z} \frac{dz}{dq} \right)^3,
\]
Finally, using \cite[eq. 2.34]{Gui-matrix} we complete the proof.
\end{proof}

\section{Ramanujan series of level $4$ with a shift}

This is motivated by the evaluations found in \cite{Gui-Rog} by observing that when $s=2$, a shift of $x=1/2$ of a convergent Ramanujan-type series is equivalent to the upside-down of a related ``divergent" Ramanujan-type series.

\subsection{Ramanujan series of level $4$ with the shift $1/2$}

\begin{theorem}
Case $s=2$, $x=1/2$. Let
\begin{equation}
F_4(x,q)=\sum_{n=0}^{\infty} \frac{\left( \frac12+x \right)_n^3}
{(1+x)_n^3} z_4^{n+x}, \quad G_4(x,q)=\sum_{n=0}^{\infty} \frac{\left( \frac12+x \right)_n^3}{(1+x)_n^3} \left[ a_4+b_4(n+x) \right] z_4^{n+x}.
\end{equation}
The following identities hold:
\begin{equation}\label{formu-z-xmitad}
\phi(q)= 8 \sqrt{q} \sum_{n=0}^{\infty} \sigma_3(2n+1) \frac{(-q)^n}{(2n+1)^3}, \quad 
F_4 \left(\frac12,q \right)=F_4(0,q) \phi(q),
\end{equation}
and
\begin{equation}\label{formu-zab-xmitad}
G_4\left(\frac12,q\right)=\frac{8\sqrt{q}}{\pi} \left( \sum_{n=0}^{\infty} \sigma_3(2n+1)\frac{(-q)^n}{(2n+1)^3} -  \frac{\ln|q|}{2} \sum_{n=0}^{\infty} \sigma_3(2n+1)\frac{(-q)^n}{(2n+1)^2} \right),
\end{equation}
where $q=\pm e^{- \pi \sqrt{r}}$, and $\sigma_3(n)$ is the sum of the cubes of the divisors of $n$.
\end{theorem}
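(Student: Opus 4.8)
The plan is to reduce the whole statement to a single explicit $q$-expansion identity and then integrate three times. First I would specialize Theorem~\ref{teor-yang} to level $4$ (so $s=2$, and the three numerator Pochhammer symbols coincide into $(\frac12+x)_n^3$) and to the shift $x=1/2$, where $x^3=1/8$. This gives
\[
\left( q\frac{d}{dq}\right)^3 \phi(q)=\frac18\,F_4^2(0,q)\,\sqrt{z_4}\,\sqrt{1-z_4}.
\]
The strategy is then to identify the right-hand side as a concrete $q$-series and to recover $\phi$ by applying the formal inverse of $\left(q\,d/dq\right)^3$ term by term.

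Next I would check the candidate from the other direction. Writing the proposed $\phi$ as $8\sum_{n\ge0}\sigma_3(2n+1)(-1)^n q^{\,n+1/2}/(2n+1)^3$ and using $q\frac{d}{dq}q^{\,n+1/2}=\frac{2n+1}{2}q^{\,n+1/2}$, three applications of $q\,d/dq$ collapse the denominators and yield $\left( q\frac{d}{dq}\right)^3 \phi(q)=\sqrt{q}\sum_{n\ge0}\sigma_3(2n+1)(-q)^n$. Hence the entire theorem rests on the modular identity
\[
\frac18\,F_4^2(0,q)\,\sqrt{z_4(1-z_4)}=\sqrt{q}\sum_{n=0}^{\infty}\sigma_3(2n+1)(-q)^n,
\]
which I expect to be the main obstacle. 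To establish it I would pass to the classical level-$4$ parametrization, in which $F_4(0,q)$ is a normalization of $\theta_3^2$ and $z_4=\theta_2^4/\theta_3^4$, so that $\sqrt{z_4(1-z_4)}=\theta_2^2\theta_4^2/\theta_3^4$ and the right-hand side collapses to a constant multiple of $\theta_2^2(q)\theta_4^2(q)$. The task then becomes matching this theta product with the Lambert series $\sum_{n\ge0}\sigma_3(2n+1)(-q)^n$, which can be done either by a direct Jacobi theta identity or, more safely for tracking the overall constant, by comparing Fourier coefficients up to the Sturm bound of the relevant space of modular forms for $\Gamma_0(4)$. Fixing the precise normalizations of $F_4(0,q)$ and $z_4$ in the chosen nome is exactly where the factor $8$ must be pinned down, and this bookkeeping is the delicate part.

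Granting the identity, the integration is clean. Because $\phi=F_4(1/2,q)/F_4(0,q)=z_4^{1/2}\cdot(\text{power series in }z_4)$ and $z_4(q)$ vanishes to first order at $q=0$, the function $\phi$ is a pure $q^{1/2}$-series with no constant or logarithmic part; hence the kernel of $\left(q\,d/dq\right)^3$, spanned by $1,\ln q,(\ln q)^2$, contributes nothing and the term-by-term antiderivative is the unique solution. Dividing the $n$-th coefficient of $\sqrt{q}\sum\sigma_3(2n+1)(-q)^n$ by $(n+1/2)^3=(2n+1)^3/8$ reproduces exactly $\phi(q)=8\sqrt{q}\sum_{n\ge0}\sigma_3(2n+1)(-q)^n/(2n+1)^3$, the first identity in (\ref{formu-z-xmitad}); and $F_4(1/2,q)=F_4(0,q)\phi(q)$ is immediate from the definition of $\phi$ in Theorem~\ref{teor-y-rama}.

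Finally, for (\ref{formu-zab-xmitad}) I would substitute the explicit $\phi$ into (\ref{completa-zq}). Using $q\frac{d}{dq}\!\left[\sqrt{q}(-q)^n\right]=\frac{2n+1}{2}\sqrt{q}(-q)^n$ gives $q\,d\phi/dq=4\sqrt{q}\sum\sigma_3(2n+1)(-q)^n/(2n+1)^2$, and then
\[
G_4\!\left(\tfrac12,q\right)=\frac1\pi\left(\phi(q)-\ln|q|\,q\frac{d\phi}{dq}\right)
\]
reduces directly to the claimed expression after factoring out $8\sqrt{q}/\pi$. This last step is routine once $\phi$ is in hand, so the real content of the theorem is concentrated in the theta/Lambert-series identity above.
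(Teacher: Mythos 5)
Your overall outline is the same as the paper's: specialize Theorem \ref{teor-yang} to $s=2$, $x=1/2$, reduce everything to identifying $\frac18 F_4^2(0,q)\sqrt{z_4}\sqrt{1-z_4}$ with $\sqrt{q}\sum_{n\ge 0}\sigma_3(2n+1)(-q)^n$, invert $(q\,d/dq)^3$ term by term (your kernel argument correctly rules out $1,\ln q,(\ln q)^2$ contributions), and then apply Theorem \ref{teor-y-rama}. Those integration steps and the passage to $G_4(1/2,q)$ are correct and match the paper. However, there is a genuine error in the one step you yourself identify as the crux. You use the parametrization ``$F_4(0,q)$ is a normalization of $\theta_3^2$ and $z_4=\theta_2^4/\theta_3^4$''. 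That is the parametrization of the underlying ${}_2\mathrm{F}_1(\frac12,\frac12;1;\lambda)=\theta_3^2(q)$, not of the ${}_3\mathrm{F}_2$ defining $F_4$. By Clausen's identity the correct level-$4$ data (which the paper uses) are
\begin{equation*}
F_4(0,q)=\theta_3^4(q),\qquad z_4(q)=4\lambda(q)\bigl(1-\lambda(q)\bigr),\qquad \lambda(q)=\frac{\theta_2^4(q)}{\theta_3^4(q)},
\end{equation*}
so that $\sqrt{z_4}=2\,\theta_2^2\theta_4^2/\theta_3^4$ but, crucially, $\sqrt{1-z_4}=1-2\lambda=(\theta_4^4-\theta_2^4)/\theta_3^4$ is \emph{not} constant (here one uses Jacobi's $\theta_3^4=\theta_2^4+\theta_4^4$). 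The left-hand side therefore equals $\frac14\,\theta_2^2\theta_4^2\,(\theta_4^4-\theta_2^4)$, a weight-$4$ form, and not a constant multiple of the weight-$2$ form $\theta_2^2\theta_4^2$.

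This is not the ``bookkeeping'' of the constant $8$ that you defer: the identity you propose to verify, $c\,\theta_2^2(q)\theta_4^2(q)=\sqrt{q}\sum_{n\ge0}\sigma_3(2n+1)(-q)^n$, is false for every constant $c$. The coefficients of $\theta_2^2\theta_4^2=4\sqrt{q}\,(1-4q+\cdots)$ grow linearly, while $\sigma_3(2n+1)$ grows cubically; already the second coefficients disagree ($-4$ versus $-\sigma_3(3)=-28$), so your proposed Sturm-bound check would fail at once rather than pin down a normalization. With the corrected parametrization the computation goes through exactly as in the paper: one gets $(q\,d/dq)^3\phi=\frac14\theta_2^2\theta_4^2(\theta_4^4-\theta_2^4)=\sqrt{q}\sum_{n\ge0}\sigma_3(2n+1)(-q)^n$ (a classical weight-$4$ identity on $\Gamma_0(4)$, which is where your Sturm-bound idea is legitimately useful and is arguably more rigorous than the paper's appeal to OEIS), and the remainder of your argument then completes the proof unchanged.
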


\begin{proof}
Applying Theorem \ref{teor-yang}, we obtain 
\begin{equation}\label{dif-eq-q-xmitad}
\left( q \frac{d}{dq} \right)^3 \phi(q) = \frac18 F_4^2(0,q) \; \sqrt{1-z_4} \; \sqrt{z_4}, \qquad \phi(q) = \frac{F_4(1/2,q)}{F_4(0,q)}.
\end{equation}
But for $s=2$ we know that
\[
F_4(0,q)=\theta_3^4(q), \quad z_4(q)=4 \lambda(q)(1-\lambda(q)), \quad \lambda(q)=\frac{\theta_2^4(q)}{\theta_3^4(q)}. 
\]
Using the identity 
$\theta_3^4(q)=\theta_2^4(q)+\theta_4^4(q)$, we get
\[
\left( q \frac{d}{dq} \right)^3 \phi(q) =  \frac14 \theta_2^2(q)\theta_4^2(q) \left[ \theta_4^4(q)-\theta_2^4(q) \right] = \sqrt{q} f(q).
\]
Then, with OEIS (on line encyclopedia of integer sequences), we could identify the coefficient of $(-q)^n$ in the expansion of $f(q)$ as $\sigma_3(2n+1)$. Hence
\[
\left( q \frac{d}{dq} \right)^3 \phi(q) =\sqrt{q} \sum_{n=0}^{\infty} \sigma_3(2n+1)(-q)^n,
\]
which proves (\ref{formu-z-xmitad}). Finally, we only have to apply Theorem \ref{teor-y-rama} to arrive at (\ref{formu-zab-xmitad}).
\end{proof}
The following identity is known:
\[
\sqrt{q} \sum_{n=0}^{\infty} \sigma_3(2n+1)(-q)^n = \frac{i}{240} \left[ E_4(\sqrt{-q}) - 9 E_4(-q) + 8 E_4(q^2) \right],
\]
where $E_4(q)$ is the Eisenstein series
\[
E_4(q)= \frac{45}{\pi^4} \sum_{(n,m) \neq (0,0)} \frac{1}{(n+\tau m)^4}, \qquad q=e^{2\pi i \tau}.
\]
Hence
\[
\left( q \frac{d}{dq} \right)^3 \phi(q) = \frac{i}{240} \left[ E_4(\sqrt{-q}) - 9 E_4(-q) + 8 E_4(q^2) \right].
\]
We use it to prove the following theorem:

\begin{theorem}
If $q=-e^{-\pi \sqrt{r}}$ where $r \in \mathbb{Q^{+}}$ (case of alternating series), we have
\begin{equation}\label{form-alter-level-4}
G_4\left(\frac12, z\right) = i \frac{r^{3/2}}{\pi^2} \left[ \frac{1}{16} S\left(1,0,\frac{r}{16}; 2\right) - \frac{9}{16} S\left(1,0,\frac{r}{4}; 2\right) + \frac12 S\left(1,0,r; 2\right) \right],
\end{equation}
and if $q=e^{-\pi \sqrt{r}}$ (case of series of positive terms), we have
\begin{equation}\label{form-plus-level-4}
G_4\left(\frac12, z\right) \! = \! \frac{r^{3/2}}{\pi^2} \left[ \frac{1}{16} S\left(1,1,\frac{r}{16}\!+\!\frac14; 2\right) - \frac{9}{16} S\left(1,1,\frac{r}{4} \!+\! \frac14; 2\right) - \frac12 S\left(1,1,r\!+\!\frac14; 2 \right) \right],
\end{equation}
where 
\[
S(A, B, C; t) = \sum_{(n,m) \neq (0,0)} \frac{1}{(An^2+Bnm+Cm^2)^t},
\]
is the Epstein zeta function \cite{glasser-zucker}.
\end{theorem}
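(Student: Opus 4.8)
The plan is to prove both identities by expanding each Epstein zeta function on the right-hand side through Poisson summation (in the spirit of the Chowla--Selberg formula) and matching the outcome against the Lambert-type series for $G_4(\tfrac12,q)$ supplied by (\ref{formu-zab-xmitad}). Write $x=e^{-\pi\sqrt r/2}$, so that $\ln|q|=-\pi\sqrt r$, and recall from (\ref{formu-z-xmitad}) that $\phi(q)=8\sqrt q\,\Phi(q)$ and $q\,d\phi/dq=4\sqrt q\,\Psi(q)$ where $\Phi,\Psi$ are the series $\sum_n\sigma_3(2n+1)(-q)^n/(2n+1)^{3}$ and $\sum_n\sigma_3(2n+1)(-q)^n/(2n+1)^{2}$. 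Collecting the phases from $\sqrt q$ and $(-q)^n$, the alternating case ($q=-e^{-\pi\sqrt r}$) gives a global factor $i$ times the all-positive odd series $\sum_{k\ \mathrm{odd}}\sigma_3(k)x^{k}/k^{s}$, whereas the positive case ($q=e^{-\pi\sqrt r}$) produces the mod-$4$ character twist $\sum_{k\ \mathrm{odd}}\chi_{-4}(k)\sigma_3(k)x^{k}/k^{s}$. Thus $G_4(\tfrac12,q)$ is an explicit combination of two odd-indexed divisor series, and it suffices to show the prescribed combination of $S(1,B,C;2)$ reproduces them.

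I would expand each $S(1,B,C;2)$ as follows. Split off the $m=0$ term, which contributes $2\zeta(4)$; for $m\neq0$ complete the square $An^2+Bnm+Cm^2=(n+\tfrac{Bm}{2})^2+\gamma m^2$ with $\gamma=C-B^2/4$ and apply Poisson summation in $n$, using $\int_{\mathbb R}(u^2+a^2)^{-2}e^{-2\pi i ku}\,du=\tfrac{\pi}{2a^3}(1+2\pi a|k|)e^{-2\pi a|k|}$ with $a=\sqrt\gamma\,|m|$. The $k=0$ term yields the power law $\tfrac{\pi}{2}\gamma^{-3/2}|m|^{-3}$, summing to $\pi\zeta(3)\gamma^{-3/2}$, while the $k\neq0$ terms give an \emph{elementary} Lambert-type double series $\sum_{m,k}\bigl(\tfrac{\pi}{\gamma^{3/2}m^{3}}+\tfrac{2\pi^2 k}{\gamma m^{2}}\bigr)e^{-2\pi\sqrt\gamma\,mk}$ carrying the phase $\cos(2\pi k\tfrac{Bm}{2})$ (the $K_{3/2}$ Bessel functions being elementary is what collapses the expansion to exponentials). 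For the alternating case all three $\tau$ are purely imaginary, so $B=0$ and the phase is trivial.

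The cleanness of the statement comes from a cancellation. With the weights $(\tfrac1{16},-\tfrac9{16},+\tfrac12)$ of (\ref{form-alter-level-4}) one has $\tfrac1{16}-\tfrac9{16}+\tfrac12=0$, so the constant $2\zeta(4)$ disappears, and likewise $\tfrac1{16}(\tfrac r{16})^{-3/2}-\tfrac9{16}(\tfrac r4)^{-3/2}+\tfrac12 r^{-3/2}=0$ kills the $\zeta(3)$ power law; only the Lambert part survives. Collapsing the double sums via $\sum_{mk=j}m^{-3}=\sigma_3(j)/j^{3}$ and $\sum_{mk=j}k\,m^{-2}=\sigma_3(j)/j^{2}$ turns each $S(1,0,C;2)$ into blocks $\sum_j\sigma_3(j)x^{j}/j^{s}$, $\sum_j\sigma_3(j)x^{2j}/j^{s}$, $\sum_j\sigma_3(j)x^{4j}/j^{s}$; the elementary identities $8g_{\mathrm{even}}(x)=9g(x^2)-g(x^4)$ and $4f_{\mathrm{even}}(x)=9f(x^2)-2f(x^4)$, where $g=\sum\sigma_3(j)x^{j}/j^{3}$ and $f=\sum\sigma_3(j)x^{j}/j^{2}$ (consequences of the Euler factor of $\sigma_3$ at the prime $2$), project onto the odd indices and reproduce exactly $\Phi$ and $\Psi$. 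Restoring the prefactor $i\,r^{3/2}/\pi^2$ gives (\ref{form-alter-level-4}).

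The main obstacle is the case of positive terms. There the $\tau$ attached to $\sqrt{-q},-q,q^2$ have real parts $\tfrac14,\tfrac12,0$, so the direct lattice expansion carries \emph{quarter-} and half-integer shifts, producing fourth-root-of-unity phases $i^{km}$ rather than the uniform forms $S(1,1,\,\cdot\,)$ of (\ref{form-plus-level-4}); matching the two requires decomposing each shifted sum by the parity of $m$ and reassembling. Moreover the surviving series must carry the mod-$4$ character $\chi_{-4}$, which the naive half-integer shift (giving only the mod-$2$ sign $(-1)^{j}$) does not produce, so the $\tfrac14$-shift from $\sqrt{-q}$ is essential. The delicate point is to confirm that, after this parity reassembly, the non-Lambert contributions again cancel and the twisted divisor identities recover $\Phi$ and $\Psi$; here the fact that $q=e^{-\pi\sqrt r}$ is a CM point, where the Epstein zeta factors through $\zeta$ and $L(\chi_{-4},\cdot)$, is what I would exploit, using the fully verified alternating case as the model to imitate.
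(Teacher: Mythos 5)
Your treatment of the alternating case (\ref{form-alter-level-4}) is correct, and it takes a genuinely different route from the paper's. The paper starts from the Eisenstein identity $\left( q \frac{d}{dq}\right)^3 \phi(q)=\frac{i}{240}\left[ E_4(\sqrt{-q})-9E_4(-q)+8E_4(q^2)\right]$, integrates the lattice sum $\sum U_{n,m}(r)$ termwise three times against $\pi\, dr/(2\sqrt{r})$ and takes real parts, so that the Epstein values emerge as the output of the integration; you instead expand each $S(1,0,C;2)$ by Poisson summation and match Lambert series against (\ref{formu-zab-xmitad}). Your ingredients check out: the Fourier transform $\frac{\pi}{2a^3}(1+2\pi a|k|)e^{-2\pi a|k|}$ is right; the weights kill both the $m=0$ contribution $2\zeta(4)$ (since $\frac{1}{16}-\frac{9}{16}+\frac12=0$) and the $\zeta(3)$ power law (since $\frac{1}{16}\cdot 64-\frac{9}{16}\cdot 8+\frac12=0$); and the divisor identities $8g_{\mathrm{even}}(x)=9g(x^2)-g(x^4)$ and $4f_{\mathrm{even}}(x)=9f(x^2)-2f(x^4)$ follow from the Euler factor $\sigma_3(2^{a+1}u)=\frac{8^{a+2}-1}{7}\sigma_3(u)$. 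This makes (\ref{form-alter-level-4}) a coefficientwise identity of power series in $x=e^{-\pi\sqrt{r}/2}$, which is, if anything, tighter than the paper's ``integrating and simplifying.''

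The genuine gap is the positive-terms case (\ref{form-plus-level-4}), which you leave as an acknowledged sketch; and within your framework the obstacle is structural, not routine. With the weights $\left(\frac{1}{16},-\frac{9}{16},-\frac12\right)$ nothing cancels: they sum to $-1$, so the constant $2\zeta(4)$ survives, and $\frac{1}{16}\left(\frac{r}{16}\right)^{-3/2}-\frac{9}{16}\left(\frac{r}{4}\right)^{-3/2}-\frac12 r^{-3/2}=-r^{-3/2}$, so a $\pi\zeta(3)r^{-3/2}$ term survives as well. Worse, Poisson summation applied to $S(1,1,C+\frac14;2)=\sum \left((n+\frac{m}{2})^2+Cm^2\right)^{-2}$ attaches the phase $(-1)^{km}=(-1)^{j}$ to the Lambert block indexed by $j=km$, a mod-$2$ twist; since the blocks at arguments $x^2$ and $x^4$ contain only even powers of $x$, the odd-power coefficients of the Lambert part of the right-hand side are $-8\sigma_3(j)/j^{3}$ (and $-4\sigma_3(j)/j^{2}$), of constant sign, and can never equal the $\chi_{-4}(j)$-alternating coefficients of $G_4(\frac12,q)$ as formal series. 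Hence (\ref{form-plus-level-4}) is not a Lambert-series identity at all: it can hold only at the specific point $x=e^{-\pi\sqrt{r}/2}$, where the surviving $\zeta(4)$, $\zeta(3)$ and $\ln x$ terms must be absorbed by a Ramanujan-type modular transformation relating the $\chi_{-4}$-twisted divisor sums to the $(-1)^{j}$-twisted ones. That transformation is exactly what your ``parity reassembly at the CM point'' would have to supply, and you neither state nor prove it, so half of the theorem remains unproven. Note that the paper's method never meets this asymmetry: integrating $U_{n,m}(r)$ termwise and taking real parts works verbatim for $q=+e^{-\pi\sqrt{r}}$ (only the real parts $\frac14,\frac12,0$ of the three $\tau$'s change, which produces the shifted forms directly), which is why the paper can dispatch that case as ``completely similar,'' while your approach still needs a new idea there.
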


\begin{proof}
If $q=-e^{-\pi\sqrt{r}}$ then  $-q=e^{-\pi \sqrt{r}}$, and the value of $\tau$ corresponding to $-q$ is $\tau=i \sqrt{r}/2$. If we define
\[
U_{n,m}(r) = \frac{1}{(n+m \frac{i \sqrt{r}}{4})^4} - \frac{9}{(n+m \frac{i \sqrt{r}}{2})^4} + \frac{8}{(n+m i \sqrt{r})^4},
\]
then 
\[
E_4(\sqrt{-q}) - 9 E_4(-q) + 8 E_4(q^2) = \sum_{n,n\neq(0,0)} U_{n,m}(r),
\]
and taking into account that $dq/q=\pi dr / (2\sqrt{r})$, we have
\[
\phi(r) \! = \! \frac{3i}{16\pi^5} \sum_{(n,m) \neq (0,0)} {\rm Re} \left[ \int \frac{\pi dr}{2\sqrt{r}} \int \frac{\pi dr}{2\sqrt{r}} \int \frac{\pi dr}{2\sqrt{r}} \, U_{n,m}(r) + \pi \sqrt{r} \int \frac{\pi dr}{2\sqrt{r}} \int \frac{\pi dr}{2\sqrt{r}} \, U_{n,m}(r) \right],
\]
where we have taken the real part inside the summation because for alternating series $\phi(r)$ has to be a purely imaginary number. Integrating and simplifying, we obtain (\ref{form-alter-level-4}). The proof of (\ref{form-plus-level-4}) is completely similar.
\end{proof}

\subsection{Examples of Ramanujan-type series shifted $1/2$ (level $\ell=4$)}
For $r=4$, using the known values:
\[
S(1,0,1; 2) = \frac{2\pi^2}{3} L_{-4}(2), \quad S(1,0,4; 2) = \frac{7\pi^2}{24}L_{-4}(2),
\]
see the method and the tables of \cite{glasser-zucker} or \cite{Bo-lattice}, and the obvious relation $S(1,0,\frac14; 2) = 16 S(1,0,4; 2)$, we get from (\ref{form-alter-level-4}):
\[
\sum_{n=0}^{\infty} \frac{(1)_n^3}{(\frac32)_n^3} \left( - \frac18 \right)^{n+\frac12} \left( \frac{4}{2\sqrt 2} + \frac{6}{2 \sqrt 2} n \right) = \frac{8 i}{\pi^2} \left( \frac32 S(1,0,4; 2) - \frac{9}{16} S(1,0,1; 2) \right)= \frac{i}{2} L_{-4}(2),
\]
where $L_{-4}(2)=G$ (the Catalan constant). Hence
\[
\sum_{n=0}^{\infty} (-1)^n \frac{(1)_n^3}{(\frac32)_n^3} \frac{2 + 3 n}{8^n}  = 2 G.
\]
Below, we show two more examples
\[
\sum_{n=0}^{\infty} \frac{(1)_n^3}{(\frac32)_n^3} \left( \frac{42\sqrt 5 +30}{32}n +\frac{26\sqrt 5 + 14}{32} \right) \frac{1}{2^{6n+3}} \left( \frac{\sqrt 5 -1}{2} \right)^{8n+4}=\frac{\pi^2}{240},
\]
which corresponds to the value $q=e^{-\pi \sqrt{15}}$, and
\[
\sum_{n=0}^{\infty} (-1)^n \frac{(1)_n^3}{(\frac32)_n^3} \left( \frac{5\sqrt 2 -6}{4}n +\frac{4\sqrt 2 -5}{4} \right) \left( \frac{\sqrt 2 -1}{2} \right)^{3n} = 2L_{-4}(2)-\frac{\sqrt{2}}{2} L_{-8}(2),
\]
which corresponds to the value $q=-e^{-\pi \sqrt{8}}$. 
Observe that in \cite{Gui-Rog} we arrive at the results by relating ``divergent" series to convergent ones by means of the ``upside-down" transformation. In addition, observe that for the levels $\ell=1,2,3$ the two transformations (shift and ``upside-down") lead to completely different series.

\subsection{Some $q$-series corresponding to $s=2$ ($\ell=4$) with other shifts}
We have proved the following identity:
\begin{equation}
\left( q \frac{d}{dq} \right)^3 \frac{F_4(x,q)}{F_4(0,q)} = x^3 F_4^2(0,q) \; \sqrt{1-z_4} \; z_4^x, \quad \phi(q)=\frac{F_4(x,q)}{F_4(0,q)}.
\end{equation}
Hence, if we define
\[
f(x,q)=F_4^2(0,q) \; \sqrt{1-z_4} \; z_4^x = \theta_3^8(q) (1-2\lambda(q)) \big[4\lambda(q)(1-\lambda(q)) \big]^x,
\]
we have
\[
\phi(q) = x^3 \int_0^{q} \int_0^{q} \int_0^{q} f(x,q) \; \frac{dq}{q} \; \frac{dq}{q} \, \frac{dq}{q}.
\]
Finally, we obtain
\[
\sum_{n=0}^{\infty} z^{n+x}  \frac{ \left( \frac12+x \right)_n^3}{ (1+x)_n^3} (a+b(n+x))=\frac{1}{\pi} \left( \phi(q) - \ln|q| \, q \frac{d \phi(q)}{dq} \right),
\]
For $m=2,3,4,6,8,12,24$, the function 
\[
h_m(q) = 64^{-1/m} \, f(1/m, q^m) = 16^{-1/m} \, \theta_3^8(q^m) (1-2\lambda(q^m)) \big[ \lambda(q^m) (1-\lambda(q^m)) \big]^{\frac{1}{m}}
\]
has integer coefficients. Below we write the cases $m=2, 3, 4, 6, 8$:
\begin{align*}
h_{2}(q) & = q-28q^3+126q^5-344q^7+757q^9-1332q^{11}+2198q^{13}-3528 q^{15} + 4914 q^{17} \\ 
& \quad - 6860 q^{19} + 9632q^{21}- 12168 q^{23} + 15751 q^{25} - 20440 q^{27} + 24390 q^{29} - \cdots, \\
h_{3}(q) & = q-24q^4+20q^7+0 q^{10}-70q^{13}+192q^{16}+56q^{19}+0q^{22} - 125q^{25}-480q^{28} \\ 
& \quad - 308 q^{31}+ 0 q^{34}+110 q^{37} + 0 q^{40} - 520 q^{43} + 0 q^{46} + 57q^{49} + 1680 q^{52}  + \cdots, \\
h_{4}(q) & = q+22q^5-27q^9-18q^{13}-94q^{17}+0q^{21}+359 q^{25} - 130 q^{29}+0q^{33} +214 q^{37} \\ 
& \quad  -230 q^{41}-594 q^{45}-343 q^{49}+518 q^{53} + 0 q^{57} +830 q^{61}-396 q^{65} + \cdots, \\
h_{6}(q) & = q-20q^7-70q^{13}-56q^{19}-125q^{25}-308q^{31}+110q^{37}-520q^{43} + 57q^{49} \\ 
& \quad + 0 q^{55} + 182 q^{61}-880 q^{67} +1190 q^{73}-884q^{79} + 0 q^{85} - 1400 q^{91} + \cdots, \\
h_{8}(q) &=q-19q^{9}-90 q^{17}-125 q^{25}-200q^{33}-522q^{41}-343q^{49}+360q^{57}+0 q^{65} \\ 
& \quad -430q^{73} + 145q^{81} +1026q^{89}+1910q^{97}-270q^{113}+3669q^{121}+1368q^{129} \\
& \quad -2250 q^{137} +0q^{145} + 1710q^{153} + 0 q^{161}-2197 q^{169}+920 q^{177} +\cdots.
\end{align*} 
The cases $m=2,3,4,6$ are in OEIS \cite{oeis}, while the cases $8,12,24$ are not yet in it. We observe that the coefficient of $q^k$ multiplied by the coefficient of $q^j$, where $k-1$ and $j-1$ are multiples of $m$, equals the coefficient of $q^{kj}$ when $k$ and $j$ are coprime.

\section{The $q$-series for Ramanujan series shifted $1/2$. Cases $s=4, 6, 3$}

In \cite{Gui-rama-closely} I conjecture the value of (\ref{rama-G-x}) in cases when $z$, $b$, $a$ are algebraic numbers, and $x=1/2$. The observed results corresponding to $s=4$, $s=6$ and $s=3$ involve neperian logarithms in case of alternating series and arc tangent values in case of series of positive terms. We rewrite those conjectures, together with all the other cases corresponding to rational values of $z$, in the tables of this paper. Some few but representative examples are in my thesis \cite[pp. 44--46]{Gui-thesis}. Notice that in \cite{Gui-rama-closely} and in \cite[pp. 44--46]{Gui-thesis} there are also examples of shifted series corresponding to Ramanujan-like series for $1/\pi^2$ and $1/\pi^3$. However we do not know how to get $q$-series for those shifted series.

\subsection{The $q$-series for Ramanujan series with $s=4$ ($\ell=2$) and the shift $1/2$}

\begin{theorem}
Case $s=4$, $x=1/2$. Let
\begin{multline}\label{desarrollo-2}
f(q)=\frac{1}{2\sqrt{q}} \int \eta^4(q) \eta^4(q^2) \left( 1-\frac{128}{64+\eta^{24}(q) \eta^{-24}(q^2)} \right) \frac{dq}{q} \\ = 1-44q+1126q^2-27096q^3+640909q^4-15036548q^5+351245038 q^6 - \cdots,
\end{multline}
and
\begin{align}
F_2(x,q) &=\sum_{n=0}^{\infty} \frac{\left( \frac12+x \right)_n\left( \frac14+x \right)_n\left( \frac34+x \right)_n}
{(1+x)_n^3} z_2^{n+x}, \\
G_2(x,q) &=\sum_{n=0}^{\infty} \frac{\left( \frac12+x \right)_n\left( \frac14+x \right)_n\left( \frac34+x \right)_n}{(1+x)_n^3} \left[a_2+b_2(n+x) \right] z_2^{n+x}, 
\end{align}
where $z$, $a$, $b$ depend on $q$ and $G(0,q)=1/\pi$. Then, the following identities hold:
\begin{equation}
F_2\left(\frac12,q\right)= 16 F_2(0,q) \sqrt{q} \sum_{n=0}^{\infty} c_n \frac{q^n}{(2n+1)^2},
\end{equation}
and
\begin{equation}
G_2\left(\frac12,q\right) = \frac{16\sqrt{q}}{\pi} \left( \sum_{n=0}^{\infty} c_n \frac{q^n}{(2n+1)^2} - \frac{\ln|q|}{2} \sum_{n=0}^{\infty} c_n \frac{q^n}{2n+1} \right), \label{cuartos-G}
\end{equation}
where $c_n$ is the coefficient of $q^n$ in $f(q)$.
\end{theorem}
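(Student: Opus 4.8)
The plan is to run the same argument as in the level-$4$ ($s=2$) theorem above, replacing the theta parametrization by the one for level $2$. First I would apply Theorem~\ref{teor-yang} with $s=4$ and $x=1/2$, which gives
\[
\left(q\frac{d}{dq}\right)^3\phi(q)=\frac18\,F_2^2(0,q)\,\sqrt{1-z_2}\,\sqrt{z_2},\qquad \phi(q)=\frac{F_2(1/2,q)}{F_2(0,q)}.
\]
All of the content is then encoded in writing the right-hand side explicitly as a $q$-series and integrating it in $\log q$; the two displayed identities drop out once $\phi$ is known.

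The decisive step, and the one I expect to be the main obstacle, is the modular identity
\[
\frac18\,F_2^2(0,q)\,\sqrt{1-z_2}\,\sqrt{z_2}=2\,\eta^4(q)\eta^4(q^2)\left(1-\frac{128}{64+\eta^{24}(q)\eta^{-24}(q^2)}\right),
\]
which plays here the role of the theta simplification $\tfrac14\theta_2^2\theta_4^2(\theta_4^4-\theta_2^4)$ in the $s=2$ proof. To establish it I would insert the known level-$2$ parametrization of $F_2(0,q)$ and $z_2(q)$ as eta quotients. Writing $u=\eta^{24}(q)\eta^{-24}(q^2)$, a constant multiple of the Hauptmodul of $\Gamma_0(2)$, the bracket equals $(u-64)/(u+64)$, and I expect it to coincide with $\sqrt{1-z_2}$, while $F_2^2(0,q)\sqrt{z_2}$ should collapse to $16\,\eta^4(q)\eta^4(q^2)$. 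Deriving these two sub-identities (or their product directly) from the standard eta and theta relations for signature $4$ is where the real work lies; a check of the leading coefficients, using $\eta^4(q)\eta^4(q^2)=\sqrt q\,(1+O(q))$ and $z_2(q)\sim 256\,q$ (which the constant $2$ forces), fixes the normalizing constants, and matching the resulting integer $q$-series against OEIS gives additional confidence.

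With the identity in hand I would read off the definition of $f$ in \eqref{desarrollo-2}, which is precisely $2\sqrt q\,f(q)=\int \eta^4(q)\eta^4(q^2)\big(1-128/(64+u)\big)\,dq/q$, so that one integration of the source term is already absorbed into $f$. Setting $f(q)=\sum_{n\ge0}c_nq^n$, one integration of the identity above then yields
\[
\left(q\frac{d}{dq}\right)^2\phi(q)=4\sqrt q\,f(q)=4\sum_{n=0}^\infty c_n\,q^{\,n+1/2},
\]
with vanishing constant of integration because $\phi=O(\sqrt q)$. Integrating twice more multiplies the coefficient of $q^{\,n+1/2}$ by $(n+\tfrac12)^{-2}=4(2n+1)^{-2}$, giving $\phi(q)=16\sqrt q\sum_{n\ge0}c_n q^n/(2n+1)^2$, and multiplying by $F_2(0,q)$ gives the first identity.

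For the second identity I would apply Theorem~\ref{teor-y-rama}, namely $G_2(1/2,q)=\tfrac1\pi\big(\phi-\ln|q|\,q\,d\phi/dq\big)$. Since
\[
q\frac{d\phi}{dq}=16\sum_{n=0}^\infty\frac{c_n}{(2n+1)^2}\Big(n+\tfrac12\Big)q^{\,n+1/2}=8\sqrt q\sum_{n=0}^\infty\frac{c_n\,q^n}{2n+1},
\]
substituting the two series into Theorem~\ref{teor-y-rama} reproduces \eqref{cuartos-G} exactly. Apart from the eta identity, the only delicate bookkeeping is checking that the lower limits of the integrations contribute nothing, which follows from the leading behaviour $\phi=O(\sqrt q)$ noted above.
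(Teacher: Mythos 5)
Your proof is correct and follows essentially the same route as the paper's: apply Theorem~\ref{teor-yang} with $x=1/2$, insert the Chan--Chan--Liu eta-quotient parametrization of $z_2(q)$ and $F_2(0,q)$, and integrate the resulting $q$-expansion three times against $dq/q$, with Theorem~\ref{teor-y-rama} then giving \eqref{cuartos-G}. Your version is in fact more careful on one point: the constant in your key identity $\frac18F_2^2(0,q)\sqrt{1-z_2}\,\sqrt{z_2}=2\,\eta^4(q)\eta^4(q^2)\bigl(1-\frac{128}{64+\eta^{24}(q)\eta^{-24}(q^2)}\bigr)$ is right and is exactly what produces the factor $16$ in the theorem, whereas the paper's displayed intermediate equation omits this factor of $2$ (and misprints $5\cdot 126$ for $5\cdot 1126$ in $g(q)$), a slip that, read literally, would yield $8\sqrt{q}$ instead of $16\sqrt{q}$ after the three integrations.
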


\begin{proof}
In this case we know that \cite[Table 1]{ChanChanLiu}
\[
z_2(q)=4x_2(q)(1-x_2(q)), \quad F_2(0,q)=8\frac{\eta^8(q^2)}{\eta^4(q)} \frac{1}{\sqrt{x_2(q)}}, \quad 
x_2(q)=\frac{64}{64+ \eta^{24}(q)\eta^{-24}(q^2)},
\]
where $\eta(q)$ is the Dedekind $\eta$ function:
\[
\eta(q)=q^{1/24} \prod_{n=1}^{\infty} (1-q^n).
\]
Hence, for
\[
\left( q \frac{d}{dq} \right)^3 \phi(q)= \frac18 F_2^2(0,q) \sqrt{z_2(q)} \sqrt{1-z_2(q)},
\]
we obtain
\[
\left( q \frac{d}{dq} \right)^3 \phi(q) = \eta^4(q) \eta^4(q^2) \left( 1-\frac{128}{64+\eta^{24}(q) \eta^{-24}(q^2)} \right) = \sqrt{q} \, g(q),
\]
where
\begin{equation}\label{forma-s-4}
g(q)=1-3\cdot 44 \, q+ 5\cdot 126 \, q^2- 7 \cdot 27096 \, q^3+ 9\cdot 640909 \, q^4-11\cdot 15036548 \, q^5+\cdots.
\end{equation}
Hence, the theorem holds. 
\end{proof}

\begin{conjecture}
The coefficient of $q^n$ in $g(q)$ is divisible by $2n+1$ which is equivalent to assume that all the coefficients $c_n$ of $f(q)$ are integer numbers. In addition, $c_n \equiv 1 \pmod{p^2}$ when $2n+1$ is a prime number $p$.
\end{conjecture}

\subsection{The $q$-series for Ramanujan series with $s=3$ ($\ell=3$) shifted $1/2$}
\begin{theorem}
Let
\begin{multline}
f(q)=\frac{1}{2\sqrt{q}} \int \eta^2(q^3) \eta^6(q) \left( 1+9\frac{\eta^3(q^9)}{\eta^3(q)}  \right) \left( 1-\frac{54}{27+\eta^{12}(q) \eta^{-12}(q^3)} \right) \frac{dq}{q} \\ =
1-17 \, q + 126 \, q^2-832 \, q^3 + 5329 \, q^4 -  33516 \, q^5 + 209054 q^6 - 1298142 q^7 + \cdots,
\end{multline}
and
\begin{align*}
F_3(x,q) &=\sum_{n=0}^{\infty} \frac{\left( \frac12+x \right)_n\left( \frac13+x \right)_n\left( \frac23+x \right)_n}{(1+x)_n^3} z_3^{n+x}, \\
G_3(x,q) &=\sum_{n=0}^{\infty} \frac{\left( \frac12+x \right)_n\left( \frac13+x \right)_n\left( \frac23+x \right)_n}{(1+x)_n^3} \left[a_3+b_3(n+x) \right] z_3^{n+x}, 
\end{align*}
The following identities hold:
\begin{equation}
F_3\left(\frac12,q\right)= 16 F_3(0,q) \sqrt{q} \sum_{n=0}^{\infty} c_n \frac{q^n}{(2n+1)^2},
\end{equation}
and
\begin{equation}
G_3\left(\frac12,q\right) = \frac{16\sqrt{q}}{\pi} \left( \sum_{n=0}^{\infty} c_n \frac{q^n}{(2n+1)^2} - \frac{\ln|q|}{2} \sum_{n=0}^{\infty} c_n \frac{q^n}{2n+1} \right), \label{tercios-G}
\end{equation}
where $c_n$ is the coefficient of $q^n$ in $f(q)$.
\end{theorem}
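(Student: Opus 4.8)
The plan is to follow the same scheme as in the case $s=4$ proved just above, the only genuinely new ingredient being the $\eta$-quotient identity needed at level $3$.

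First I would apply Theorem \ref{teor-yang} with $x=1/2$, so that, writing $\phi(q)=F_3(1/2,q)/F_3(0,q)$,
\[
\left(q\frac{d}{dq}\right)^3\phi(q)=\frac18 F_3^2(0,q)\sqrt{z_3(q)}\,\sqrt{1-z_3(q)}.
\]
I would then insert the level-$3$ parametrization. Choosing $x_3(q)$ with $z_3=4x_3(1-x_3)$ gives $\sqrt{z_3}\,\sqrt{1-z_3}=2\sqrt{x_3(1-x_3)}\,(1-2x_3)$, and the factor $1-2x_3$ is precisely $1-54/(27+\eta^{12}(q)\eta^{-12}(q^3))$. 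It then remains to identify the surviving weight-$4$ factor $\tfrac14 F_3^2(0,q)\sqrt{x_3(1-x_3)}$ with $\eta^2(q^3)\eta^6(q)\bigl(1+9\eta^3(q^9)/\eta^3(q)\bigr)$.

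This identification is the step I expect to be the main obstacle. In contrast to the levels $\ell=2,4$, where $F(0,q)$ is essentially a single $\eta$-monomial divided by $\sqrt{x}$, at level $3$ the weight-$2$ form $F_3(0,q)$ is built from the Borwein cubic theta function $a(q)$, and it is this that forces the extra binomial factor $1+9\eta^3(q^9)/\eta^3(q)$. Concretely, I would invoke the cubic identity $\eta^3(q)+9\eta^3(q^9)=a(q)\,\eta(q^3)$, which recasts the target as
\[
\eta^2(q^3)\eta^6(q)\Bigl(1+9\frac{\eta^3(q^9)}{\eta^3(q)}\Bigr)=a(q)\,\eta^3(q)\,\eta^3(q^3),
\]
and then check, using the closed forms for $z_3$ and $F_3(0,q)$ from the cubic theory (e.g. \cite{ChanChanLiu}), that $\tfrac14 F_3^2(0,q)\sqrt{x_3(1-x_3)}$ equals this $\eta$-product. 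Establishing this $\eta$-quotient identity and recognizing the resulting integer $q$-series is the only place where a non-mechanical input is required.

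Once the right-hand side is written as $\sqrt q\,g(q)$ with $g$ an integer series, the rest is bookkeeping. The definition of $f(q)$ as the normalized integral of this integrand against $dq/q$ ties its coefficients $c_n$ to those of $g$, and recovering $\phi$ itself means undoing all three derivatives: applying $(q\,d/dq)^{-3}$ term by term sends $q^{n+1/2}$ to $(n+\tfrac12)^{-3}q^{n+1/2}=8\,(2n+1)^{-3}q^{n+1/2}$, which produces the denominators $(2n+1)^2$ and, with the normalization above, gives $\phi(q)=16\sqrt q\sum_n c_n q^n/(2n+1)^2$; hence $F_3(1/2,q)=F_3(0,q)\phi(q)$. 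Finally, for $G_3(1/2,q)$ I would invoke Theorem \ref{teor-y-rama}: equation (\ref{completa-zq}) gives $G_3(1/2,q)=\pi^{-1}\bigl(\phi(q)-\ln|q|\,q\,d\phi/dq\bigr)$, and since $q\,d\phi/dq=8\sqrt q\sum_n c_n q^n/(2n+1)$, substitution yields (\ref{tercios-G}).
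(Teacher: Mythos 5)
Your proposal follows the paper's proof essentially step for step: apply Theorem \ref{teor-yang} with $x=1/2$, substitute the level-$3$ parametrization $z_3=4x_3(1-x_3)$, $x_3=27/(27+\eta^{12}(q)\eta^{-12}(q^3))$, recognize the resulting weight-$4$ form as the $\eta$-product defining $f(q)$, invert $(q\,d/dq)^3$ termwise, and finish with Theorem \ref{teor-y-rama}. The only genuine difference is how the $\eta$-product identification is obtained. The paper simply quotes from Chan--Chan--Liu's Table 1 the closed form
\[
F_3^2(0,q)=27\,\eta^8(q^3)\left(1+9\,\frac{\eta^3(q^9)}{\eta^3(q)}\right)\left(1+\frac{1}{27}\,\frac{\eta^{12}(q)}{\eta^{12}(q^3)}\right),
\]
after which the factor $(27+t)/27$, with $t=\eta^{12}(q)\eta^{-12}(q^3)$, cancels against the denominator of $\sqrt{x_3(1-x_3)}$; you instead re-derive the same fact from the Borwein cubic theory, $F_3(0,q)=a^2(q)$, $x_3=c^3/a^3$, $1-x_3=b^3/a^3$, together with $\eta^3(q)+9\eta^3(q^9)=a(q)\eta(q^3)$ (equivalently $a(q)=b(q)+3c(q^3)$). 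That identity is correct, and your route is a legitimate, self-contained replacement for the table look-up.

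There is, however, a constant-tracking inconsistency in your write-up that you should repair --- and which, once repaired, shows that the prefactor $16$ cannot be right. First, your two normalization claims contradict each other: if $\frac{1}{4}F_3^2(0,q)\sqrt{x_3(1-x_3)}$ were exactly $a(q)\eta^3(q)\eta^3(q^3)$, then your own inversion step ($q^{n+1/2}\mapsto 8(2n+1)^{-3}q^{n+1/2}$, combined with $c_n=g_n/(2n+1)$ from the single integration in the definition of $f$) yields the prefactor $8$, not $16$. Second, the identification itself is off by a constant: since $bc=3\,\eta^2(q)\eta^2(q^3)$, one gets $\frac{1}{4}\,a\,(bc)^{3/2}=\frac{3\sqrt{3}}{4}\,a(q)\eta^3(q)\eta^3(q^3)$, so the correct prefactor is $8\cdot\frac{3\sqrt{3}}{4}=6\sqrt{3}=\sqrt{108}$ --- as it must be, because $F_3(1/2,q)\sim\sqrt{z_3}\sim\sqrt{108\,q}$ as $q\to 0$ (compare $16=\sqrt{256}$ at level $2$ where $z_2\sim 256\,q$, $8=\sqrt{64}$ at level $4$, and $24\sqrt{3}=\sqrt{1728}$ at level $1$, where the paper does retain the factor $3\sqrt{3}$). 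To be fair, the paper's own proof contains the same slip: its displayed formula for $(q\,d/dq)^3\phi$ also drops the factor $3\sqrt{3}/4$, and the $16$ in the statement appears to be carried over from the $s=4$ theorem. So this is not a defect of your approach relative to the paper's, and the expansions of $f$ and $g$ are unaffected; but a careful execution of your plan would (and should) end with $6\sqrt{3}$ in place of $16$ in both displayed identities.
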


\begin{proof}
For this case we know that \cite[Table 1]{ChanChanLiu}
\[
z_3(q)=4x_3(q)(1-x_3(q)), \quad \text{where} \quad x_3(q)=\frac{27}{27 + \eta^{12}(q) \eta^{-12}(q^3)},
\]
and
\[
F_3^2(0, q)=27 \eta^8(q^3) \left( 1+9 \frac{\eta^3(q^9)}{\eta^3(q)} \right) \left(1+\frac{1}{27} \frac{\eta^{12}(q)}{\eta^{12}(q^3)} \right),
\]
Hence, for
\[
\left( q \frac{d}{dq} \right)^3 \phi(q)= \frac18 F_3^2(0,q) \sqrt{z_3(q)} \sqrt{1-z_3(q)},
\]
we obtain
\[
\left( q \frac{d}{dq} \right)^3 \phi(q) = \eta^2(q^3) \eta^6(q) \left( 1+9\frac{\eta^3(q^9)}{\eta^3(q)}  \right) \left( 1-\frac{54}{27+\eta^{12}(q) \eta^{-12}(q^3)} \right) = \sqrt{q} \, g(q).
\]
where
\begin{equation}\label{forma-s-3}
g(q)=1-3 \cdot 17 \, q + 5 \cdot 126 \, q^2-7 \cdot 832 \, q^3 + 9 \cdot 5329 \, q^4 - 11\cdot 33516 \, q^5 + \cdots.
\end{equation}
Hence the theorem holds.
\end{proof}

\begin{conjecture}
The coefficient of $q^n$ in $g(q)$ is divisible by $2n+1$ which is equivalent to assume that all the coefficients $c_n$ of $f(q)$ are integer numbers. In addition, $c_n \equiv 1 \pmod{p^2}$ when $2n+1$ is a prime number $p$.
\end{conjecture}

\subsection{The $q$-series for Ramanujan series with $s=6$ ($\ell=1$) shifted $1/2$}

In this case we know that \cite[Table 1]{ChanChanLiu}
\[
F_1^2(0,q)=E_4(q), \quad z_1(q)=1728 \, \frac{\eta^{24}(q)}{E_4^3(q)},
\]
where $E_4(q)$ is the Eisenstein series
\[
E_4(q) = 1+240 \sum_{n=0}^{\infty}\sigma_3(n) q^n = 1+240 \sum_{n=1}^{\infty} \frac{n^3 q^n}{1-q^n}.
\]
Proceeding in the same way as in the other cases, we obtain
\begin{align*}
\left( q \frac{d}{dq} \right)^3 \phi(q) &= 3 \sqrt 3 \frac{\eta^{12}(q)}{E_8(q)} \sqrt{E_4^3(q)-1728 \eta^{24}(q)} = 3 \sqrt 3 \, \frac{\eta^{12}(q)}{E_4^2(q)} E_6(q)  
= 3 \sqrt 3 \,\eta^{12}(q) \frac{E_6(q)}{E_8(q)} \\ &= 3 \sqrt 3 \sqrt{q} ( 1-3 \cdot 332 q + 5 \cdot 81126 q^2 
 -7 \cdot 19147288 q^3 + 9 \cdot 4472942221 q^4 \\ & \hskip 2cm - 11\cdot 1040187455460 q^5 + \cdots),    
\end{align*}
where $E_6(q)$ and $E_8(q)$ are the Eisenstein series
\[
E_6(q) = 1-504 \sum_{n=0}^{\infty}\sigma_5(n) q^n = 1-504 \sum_{n=1}^{\infty} \frac{n^5 q^n}{1-q^n},
\]
and
\[
E_8(q) = 1 + 480 \sum_{n=0}^{\infty}\sigma_7(n) q^n = 1 + 480 \sum_{n=1}^{\infty} \frac{n^7 q^n}{1-q^n}.
\]
Hence
\begin{equation}
G_1(q)=\frac{3\sqrt 3}{\pi} \left[ \int \frac{dq}{q} \int  \frac{dq}{q} \int \frac{dq}{q} \, \eta^{12}(q)\frac{E_6(q)}{E_8(q)} - \ln |q| \int \frac{dq}{q} \int \frac{dq}{q} \eta^{12}(q)\frac{E_6(q)}{E_8(q)} \right].    
\end{equation}
For the first single integral, we have
\begin{align*}
\int \eta^{12}(q) \frac{E_6(q)}{E_8(q)} \frac{dq}{q} = 2 \sqrt{q} f(q) &= 2 \sqrt{q} \left(1-332 q + 81126 q^2-19147288 q^3 + 4472942221 q^4 \right. \\ & \left.  \quad - 1040187455460 q^5 + \cdots \right),
\end{align*}
and again, we observe that the coefficients $c_n$ of $q^n$ in $f(q)$ are all integer numbers, and that $c_n \equiv 1 \pmod{p^2}$ when $2n+1$ is a prime number $p$.

\section{Examples of conjectured formulas, $\ell=1,2,3$}
In this section we show several examples of evaluation of some Ramanujan-type series with a shift $x_0=1/2$. More examples are in the tables. For discovering the conjectured results we have used techniques of Experimental Mathematics, e.g. the PSLQ algorithm and the function identify.
For level $\ell=2$ and $q=-e^{-\pi \sqrt{13}}$:
\[
\sum_{n=0}^{\infty} \frac{\left( 1 \right)_n\left( \frac34  \right)_n\left( \frac54 \right)_n}{(\frac32)_n^3} \left(\frac{153}{72}+\frac{260}{72}n \right) \frac{(-1)^n}{18^{2n+1}} \, {\overset ? =} \, 2 \ln 3 -3 \ln 2,
\]
For level $\ell=2$ and $q=e^{-\pi \sqrt{58}}$:
\[
\sum_{n=0}^{\infty} \frac{\left( 1 \right)_n\left( \frac34  \right)_n\left( \frac54 \right)_n}{(\frac32)_n^3} \left(\frac{4 \cdot 14298}{9801\sqrt 2}+\frac{4 \cdot 26390}{9801\sqrt 2}n \right) \frac{1}{99^{4n+2}} \, {\overset ? =} \, \frac{13}{2}{\pi}-16\arctan \frac{\sqrt{2}}{2}-24\arctan \frac{\sqrt 2}{3}.
\]
It is interesting to observe that the last result can also be written with logarithms as
\[
-13 i \ln \frac{1+i}{1-i} + 8 i \ln \frac{\sqrt 2 + i}{\sqrt 2 -i} + 12 i \ln \frac{3+\sqrt{2}i}{3-\sqrt{2}i},
\]
and observe in addition that $(\sqrt 2 +i)(\sqrt 2 - i)=3$ and $(3+\sqrt2 \, i)(3-\sqrt 2 \, i)=11$, which are divisors of $99$. 
For level $\ell=3$ and $q=-e^{-\pi \sqrt{25/3}}$:
\[
\sum_{n=0}^{\infty} \frac{\left( 1 \right)_n\left( \frac56  \right)_n\left( \frac76 \right)_n}{(\frac32)_n^3} \left(\frac{11}{24} + \frac34 n \right) \frac{(-1)^n}{80^n} \, {\overset ? =} \, 9 \ln 3 - 2\ln 2 -5 \ln 5,
\]
For level $\ell=1$ and $q=e^{-\pi \sqrt{8}}$:
\[
\sum_{n=0}^{\infty} \frac{\left( 1 \right)_n\left( \frac23  \right)_n\left( \frac43 \right)_n}{(\frac32)_n^3} \left( \frac{136}{125} + \frac{224}{125} n \right) \left( \frac35 \right)^{3n}\, {\overset ? =} \, \pi - 4 \arctan \frac12.
\]
In the tables we show all the examples corresponding to rational values of $z$. We finally give an example of an irrational series. For level $\ell=2$ and $q=-e^{-\pi \sqrt{21}}$:
\begin{multline}\nonumber
\sum_{n=0}^{\infty} 
\frac{\left( 1 \right)_n\left( \frac34  \right)_n\left( \frac54 \right)_n}{(\frac32)_n^3}  
[(756+448\sqrt{3})n +(429+256 \sqrt{3})] \frac{(-1)^n}{(42+24 \sqrt{3})^{2n}} \\
\, {\overset ? =} \, 2 \cdot (42+24 \sqrt{3})^2 \cdot \ln \left[ \frac{42+24 \sqrt{3}}{81} \right]^2.
\end{multline}
Of course our conjectured evaluations agree with the numerical approximations obtained from the corresponding $G_{\ell}(1/2,q)$.

In the table $1$ we show the Ramanujan-type series for $1/\pi$ with rational values of $z$ in the case $s=4$ (level $2$) , and in the table 2 we have written the corresponding conjectured values of $G_2(1/2,q)$. In the tables $3$ and $5$ we show the Ramanujan-type series for $1/\pi$ with rational values of $z$ in the cases $s=3$ (level $3$) and $s=6$ (level $1$) respectively, and in the tables 4 and 6 we have written the corresponding conjectured values of $G_3(1/2,q)$ and $G_1(1/2,q)$ (the tables are in the last pages of the paper after the references).

\subsection{Conclusion}
May be that discovering explicit formulas (as we have done in (\ref{formu-zab-xmitad}) for the case $s=2$ and $x=1/2$) for the coefficients $c_n$ could be a useful step towards proving the patterns observed by the author. The final step would be evaluating the $q$-series at $q=\pm \exp(-\pi \sqrt{r})$. The analog patterns observed for shifted Ramanujan-like series for $1/\pi^k$ with $k \geq 2$ see \cite{Gui-rama-closely} and  \cite[pp. 44--46]{Gui-thesis}  are further beyond the ideas of this paper.

\begin{table}[p]
    \begin{tabular}{|c c c c | c c c c|}
        \hline &&&&&&& \\
        $q$ & $a$ & $b$ & $z<0$ & $q$ & $a$ & $b$ & $z>0$ \\ &&&&&&& \\
        \hline \hline &&&&&&& \\
        $-e^{-\pi \sqrt{5}}$ & $\frac{3}{8}$ & $\frac{20}{8}$ & $-\frac{1}{4}$ &
        $e^{-\pi  \sqrt{4}}$ & $\frac{2}{9}$ & $\frac{14}{9}$ & $\frac{32}{81}$ \\ &&&&&&& \\
        $-e^{-\pi \sqrt{7}}$ & $\frac{8}{9\sqrt7}$ & $\frac{65}{9\sqrt7}$ & $-\frac{16^2}{63^2}$ &
        $e^{-\pi  \sqrt{6}}$ & $\frac{1}{2\sqrt3}$ & $\frac{8}{2\sqrt3}$ & $\frac{1}{9}$ \\ &&&&&&& \\
        $-e^{-\pi \sqrt9}$ & $\frac{3\sqrt3}{16}$ & $\frac{28\sqrt3}{16}$ & $-\frac{1}{48}$ &
        $e^{-\pi  \sqrt{10}}$ & $\frac{4}{9\sqrt2}$ & $\frac{40}{9\sqrt2}$ & $\frac{1}{81}$ \\ &&&&&&& \\
        $-e^{-\pi \sqrt{13}}$ & $\frac{23}{72}$ & $\frac{260}{72}$ & $-\frac{1}{18^2}$ &
        $e^{-\pi  \sqrt{18}}$ & $\frac{27}{49\sqrt3}$ & $\frac{360}{49\sqrt3}$ & $\frac{1}{7^4}$ \\ &&&&&&& \\
        $-e^{-\pi \sqrt{25}}$ & $\frac{41\sqrt5}{288}$ & $\frac{644\sqrt5}{288}$ & $-\frac{1}{5 \cdot 72^2}$ &
        $e^{-\pi  \sqrt{22}}$ & $\frac{19}{18\sqrt{11}}$ & $\frac{280}{18\sqrt{11}}$ & $\frac{1}{99^2}$ \\ &&&&&&& \\
        $-e^{-\pi \sqrt{37}}$ & $\frac{1123}{3528}$ & $\frac{21460}{3528}$ & $-\frac{1}{882^2}$ &
        $e^{-\pi  \sqrt{58}}$ & $\frac{4412}{9801\sqrt2}$ & $\frac{105560}{9801\sqrt2}$ & $\frac{1}{99^4}$ \\ &&&&&&& \\
        \hline
    \end{tabular}
    \vskip 0.5cm
    \caption{Ramanujan series with $s=4$ ($\ell=2$)}
\end{table}

\begin{table}[b]
    \begin{tabular}{|c c | c c |}
        \hline &&& \\
        $q$ & $-i \, G_2(\frac12,q)$ & 
        $q$ & $G_2(\frac12,q)$  \\ &&& \\
        \hline \hline &&& \\
        $-e^{-\pi \sqrt{5}}$ & $\ln 2$ & 
        $e^{-\pi  \sqrt{4}}$ & $\frac{\pi}{2}-2\arctan \frac{1}{2\sqrt 2}$ \\ 
        &&& \\ 
        $-e^{-\pi \sqrt{7}}$ & $\ln(88+13\sqrt 7)-4\ln 3$ 
        & $e^{-\pi  \sqrt{6}}$ & $\frac{\pi}{6}$ \\ 
        &&& \\
        $-e^{-\pi \sqrt{9}}$ & $\frac32 \ln 3 - 2 \ln 2$ & 
        $e^{-\pi  \sqrt{10}}$ & $\frac{\pi}{2}+4\arctan \frac{1}{2 \sqrt 2}$ \\ 
        &&& \\
        $-e^{-\pi \sqrt{13}}$ & $2\ln 3-3\ln 2$ & 
        $e^{-\pi  \sqrt{18}}$ & $-\frac{\pi}{6}+4\arctan \frac{1}{4\sqrt 3} $ \\ 
        &&& \\ 
        $-e^{-\pi \sqrt{25}}$ & $9\ln 2-2\ln 3-\frac52\ln 5$ & 
        $e^{-\pi  \sqrt{22}}$ & $-\frac{\pi}{2}+4\arctan \frac{7}{5\sqrt{11}} $ \\ 
        &&& \\
        $-e^{-\pi \sqrt{37}}$ & $\ln 2+10\ln 3-6 \ln 7$ & 
        $e^{-\pi  \sqrt{58}}$ & $\frac{13\pi}{2}-16\arctan \frac{1}{\sqrt 2}-24\arctan \frac{\sqrt 2}{3}$ \\ 
        &&& \\
        \hline
    \end{tabular}
    \vskip 0.5cm
    \caption{Some conjectured values of $G_2(1/2,q)$}
\end{table}

\begin{table}[ht]
    \begin{tabular}{|c c c c | c c c c|}
        \hline &&&&&&& \\
        $q$ & $a$ & $b$ & $z<0$ & $q$ & $a$ & $b$ & $z>0$ \\ &&&&&&& \\
        \hline \hline &&&&&&& \\
        $-e^{-\pi \sqrt{9/3}}$ & $\frac{\sqrt{3}}{4}$ & $\frac{5\sqrt{3}}{4}$ & $-\frac{9}{16}$ &
        $e^{-\pi  \sqrt{8/3}}$ & $\frac{1}{3\sqrt{3}}$ & $\frac{6}{3\sqrt{3}}$ & $\frac{1}{2}$ \\ &&&&&&& \\
        $-e^{-\pi \sqrt{17/3}}$ & $\frac{7}{12\sqrt3}$ & $\frac{51}{12\sqrt3}$ & $-\frac{1}{16}$ &
        $e^{-\pi  \sqrt{16/3}}$ & $\frac{8}{27}$ & $\frac{60}{27}$ & $\frac{2}{27}$ \\ &&&&&&& \\
        $-e^{-\pi \sqrt{25/3}}$ & $\frac{\sqrt{15}}{12}$ & $\frac{9\sqrt{15}}{12}$ & $-\frac{1}{80}$ &
        $e^{-\pi  \sqrt{20/3}}$ & $\frac{8}{15\sqrt3}$ & $\frac{66}{15\sqrt3}$ & $\frac{4}{125}$ \\ &&&&&&& \\
        $-e^{-\pi \sqrt{41/3}}$ & $\frac{106}{192\sqrt3}$ & $\frac{1230}{192\sqrt3}$ & $-\frac{1}{2^{10}}$
        &&&& \\ &&&&&&& \\
        $-e^{-\pi \sqrt{49/3}}$ & $\frac{26\sqrt7}{216}$ & $\frac{330\sqrt7}{216}$ & $-\frac{1}{3024}$
        &&&& \\ &&&&&&& \\
        $-e^{-\pi \sqrt{89/3}}$ & $\frac{827}{1500\sqrt3}$ & $\frac{14151}{1500\sqrt3}$ & $-\frac{1}{500^2}$
        &&&& \\ &&&&&&& \\
        \hline
    \end{tabular}
    \vskip 0.5cm
    \caption{Ramanujan series for $s=3$ ($\ell=3$)}
\end{table}

\begin{table}[ht]
    \begin{tabular}{|c c | c c |}
        \hline &&& \\
        $q$ & $-i \, G_3(\frac12,q)$ & 
        $q$ & $G_3(\frac12,q)$  \\ &&& \\
        \hline \hline &&& \\
        $-e^{-\pi \sqrt{9/3}}$ & $\frac{\sqrt 3}{4} \left( 3\ln 3 - 2\ln 2 \right) $ & 
        $e^{-\pi  \sqrt{8/3}}$ & $\frac{\sqrt 3}{4} \left( 3\pi-12\arctan \frac{\sqrt 2}{2} \right)$ \\ 
        &&& \\ 
        $-e^{-\pi \sqrt{17/3}}$ & $\frac{3\sqrt 3}{4} \left( 2\ln 2 - \ln 3 \right)$ 
        & $e^{-\pi  \sqrt{16/3}}$ & $\frac{\sqrt 3}{4} \left( 5\pi-24 \arctan \frac{\sqrt 2}{2} \right)$ \\ 
        &&& \\
        $-e^{-\pi \sqrt{25/3}}$ & $\frac{\sqrt{3}}{4} \left( 9\ln 3-2\ln 2-5\ln 5 \right) $ & 
        $e^{-\pi  \sqrt{20/3}}$ & $\frac{\sqrt 3}{4} \left( -3\pi + 12 \arctan \frac{\sqrt 5}{2} \right)$ \\ 
        &&& \\
        $-e^{-\pi \sqrt{41/3}}$ & $\frac{3\sqrt 3}{4} \left( 8\ln 2 - 5\ln 3 \right)$  & 
        $$ & $$ \\ 
        &&& \\ 
        $-e^{-\pi \sqrt{49/3}}$ & $\frac{\sqrt 3}{4} \left( 7\ln 7-10\ln 2-6\ln 3\right)  $ & 
        $$ & $$ \\ 
        &&& \\
        $-e^{-\pi \sqrt{89/3}}$ & $\frac{3\sqrt{3}}{4}  \left( 6 \ln 5 -6\ln 2 -5\ln 3\right)
        $ & $$ & $$ \\ 
        &&& \\
        \hline
    \end{tabular}
    \vskip 0.5cm
    \caption{Some conjectured values of $G_3(1/2,q)$}
\end{table}

\begin{table}[ht]
    \begin{tabular}{|c c c c | c c c c|}
        \hline &&&&&&& \\
        $q$ & $a$ & $b$ & $z<0$ & $q$ & $a$ & $b$ & $z>0$ \\ &&&&&&& \\
        \hline \hline &&&&&&& \\
        $-e^{-\pi\sqrt{7}}$ & $\frac{8}{5\sqrt{15}}$ & $\frac{63}{5\sqrt{15}}$ & $-\frac{4^3}{5^3}$ &
        $e^{-\pi \sqrt{8}}$ & $\frac{3}{5\sqrt5}$ & $\frac{28}{5\sqrt5}$ & $\frac{3^3}{5^3}$ \\ &&&&&&& \\
        $-e^{-\pi\sqrt{11}}$ & $\frac{15}{32\sqrt2}$ & $\frac{154}{32\sqrt2}$ & $-\frac{3^3}{8^3}$ &
        $e^{-\pi \sqrt{12}}$ & $\frac{6}{5\sqrt{15}}$ & $\frac{66}{5\sqrt{15}}$ & $\frac{4}{5^3}$ \\ &&&&&&& \\
        $-e^{-\pi\sqrt{19}}$ & $\frac{25}{32\sqrt6}$ & $\frac{342}{32\sqrt6}$ & $-\frac{1}{8^3}$ &
        $e^{-\pi \sqrt{16}}$ & $\frac{20}{11\sqrt{33}}$ & $\frac{252}{11\sqrt{33}}$ & $\frac{2^3}{11^3}$ \\ &&&&&&& \\
        $-e^{-\pi\sqrt{27}}$ & $\frac{279}{160\sqrt{30}}$ & $\frac{4554}{160\sqrt{30}}$ & $-\frac{9}{40^3}$ &
        $e^{-\pi \sqrt{28}}$ & $\frac{144\sqrt3}{85\sqrt{85}}$ & $\frac{2394\sqrt{3}}{85\sqrt{85}}$ & $\frac{4^3}{85^3}$ \\ &&&&&&& \\
        $-e^{-\pi \sqrt{43}}$ & $\frac{526\sqrt{15}}{80^2}$ & $\frac{10836 \sqrt{15}}{80^2}$ & $-\frac{1}{80^3}$ &&&& \\ &&&&&&& \\
        $-e^{-\pi \sqrt{67}}$ & $\frac{10177\sqrt{330}}{3 \cdot 440^2}$ & $\frac{261702\sqrt{330}}{3\cdot 440^2}$ & $-\frac{1}{440^3}$ &&&& \\ &&&&&&& \\
        $-e^{-\pi \sqrt{163}}$ & $\frac{27182818\sqrt{10005}}{3 \cdot 53360^2}$ & $\frac{1090280268\sqrt{10005}}{3\cdot 53360^2}$ & $-\frac{1}{53360^3}$ &&&& \\ &&&&&&& \\
        \hline
    \end{tabular}
    \vskip 0.5cm
    \caption{Ramanujan series for $s=6$ ($\ell=1$)}
\end{table}

\begin{table}[ht]
    \begin{tabular}{|c c | c c |}
        \hline &&& \\
        $q$ & $-i \, G_1(\frac12,q)$ & 
        $q$ & $G_1(\frac12,q)$  \\ &&& \\
        \hline \hline &&& \\
        $-e^{-\pi \sqrt{7}} $ & $\frac{3\sqrt 3}{8} \ln \frac{3^3}{5}$ & 
        $e^{-\pi  \sqrt{8}}$ & $\frac{3\sqrt 3}{8} \left(\pi - 4 \arctan \frac12 \right)$ \\ 
        &&& \\ 
        $-e^{-\pi \sqrt{11}}$ & $\frac{3\sqrt 3}{8} \ln 2$ 
        & $e^{-\pi  \sqrt{12}}$ & $\frac{3\sqrt 3}{8} \left(-\pi + 8\arctan \frac12 \right)$ \\ 
        &&& \\
        $-e^{-\pi \sqrt{19}}$ & $\frac{3\sqrt{3}}{8} \ln \frac{2^5}{3^3}$ & 
        $e^{-\pi  \sqrt{16}}$ & $\frac{3\sqrt 3}{8} \left(3\pi - 4\arctan \frac{\sqrt 2}{3}-12\arctan \frac{\sqrt 2}{2} \right)$ \\ 
        &&& \\
        $-e^{-\pi \sqrt{27}}$ & $\frac{3\sqrt 3}{8} \ln \frac{3^3 \cdot 5}{2^7}$ & 
        $e^{-\pi \sqrt{28}}$ & $\frac{3\sqrt 3}{8} \left(3\pi - 16\arctan \frac12 -8\arctan \frac14 \right)$ \\ 
        &&& \\ 
        $-e^{-\pi \sqrt{43}}$ & $\frac{3\sqrt 3}{8} \ln \frac{2^2 \cdot 3^9}{5^7}$ & 
        $$ & $$ \\ 
        &&& \\
        $-e^{-\pi \sqrt{67}}$ & $\frac{3\sqrt 3}{8} \ln \frac{2^{13} \cdot 11^5}{3^3\cdot 5^{11}}$ & 
        $$ & $$ \\ 
        &&& \\
        $-e^{-\pi \sqrt{163}}$ & $\frac{3\sqrt 3}{8} \ln \frac{3^{21}\cdot 5^{13}\cdot 29^5}{2^{38}\cdot 23^{11}}$ & 
        $$ & $$ \\ 
        &&& \\
        \hline
    \end{tabular}
    \vskip 0.5cm
    \caption{Some conjectured values of $G_1(1/2,q)$}
\end{table}

\end{document}